\documentclass[10pt]{article}

\usepackage{amsmath,amsthm,amsfonts,amssymb,mathtools}
\usepackage{enumitem}

\usepackage[margin=2.3cm,nohead]{geometry}

\usepackage{url}
\usepackage{xcolor}
\usepackage[normalem]{ulem}
\usepackage{hyperref}
\usepackage{graphicx}
\usepackage{grffile}
\usepackage{float}
\usepackage{caption}
\usepackage{subcaption}

\usepackage{booktabs}
\usepackage{multirow}

\usepackage{algorithm}
\usepackage{algpseudocode}

\usepackage[
  backend=biber,
  style=ieee,
  citestyle=numeric-comp,
  url=false,
  isbn=false,
  sortcites=true
]{biblatex}
\setlist[enumerate]{label=(\roman*), align=left}

\newtheorem{theorem}{Theorem}

\newtheorem{proposition}[theorem]{Proposition}
\newtheorem{definition}[theorem]{Definition}
\newtheorem{assumption}[theorem]{Assumption}
\newtheorem{remark}[theorem]{Remark}
\newtheorem{example}{Example}

\newcommand{\R}{\mathbb{R}}
\newcommand{\nw}{\mathcal{W}}
\DeclareMathOperator{\ext}{ext}
\DeclareMathOperator{\inte}{int}

\newcommand{\revise}[1]{#1}
\newcommand{\deleted}[1]{#1}

\algnewcommand{\Input}[1]{%
  \State \textbf{Input:} {\raggedright #1}%
}
\algnewcommand{\Initialize}[1]{%
  \State \textbf{Initialize:}
  \Statex \hspace*{\algorithmicindent}\parbox[t]{.8\linewidth}{\raggedright #1}
}
\algnewcommand{\Output}[1]{%
  \State \textbf{Output:} {\raggedright #1}%
}

\begin{document}
\title{On Parallel and Batch-Cutting Strategies for Norm-Minimization-Based Convex Vector Optimization}

\author{
Mohammed Alshahrani \thanks{Corresponding author. Department of Mathematics, King Fahd University of Petroleum \& Minerals, Dhahran, 31261, Saudi Arabia\\
Interdisciplinary Research Center for Smart Mobility and Logistics, King Fahd University of Petroleum \& Minerals, Dhahran, 31261, Saudi Arabia, (e-mail:{\tt mshahrani@kfupm.edu.sa}).}
}

\maketitle

\begin{abstract}
We develop parallel and batch-cutting variants of the norm-minimization-based outer approximation algorithm for convex vector optimization. The standard algorithm solves $N_k$ independent subproblems at each iteration~$k$ to evaluate all vertices of the current polyhedral approximation, but processes only the single best cut. We propose two improvements. First, we parallelize the \revise{subproblem evaluations} across $\nw$ workers, reducing per-iteration wall-clock time. Second, we introduce a batch-cutting strategy that adds up to $K$ supporting halfspaces per iteration, using information from all solved subproblems rather than discarding it. We prove that the batch-cutting variant inherits the convergence rate $O(k^{2/(1-q)})$ of the standard algorithm, where $k$ is the number of outer iterations and $q$ is the number of objectives. Computational experiments on eight test problems with $q \in \{2,3,4,5\}$ show that parallelism on 8 cores \revise{increases the speed by a factor of 1.1 to 4.2}, and batch cutting consistently reduces the iteration count by 62--80\%. However, the wall-clock benefit of batch cutting is problem-dependent: the additional cuts per iteration accelerate vertex count growth, so batch cutting is most effective when per-vertex subproblem cost dominates.
\end{abstract}

\noindent
\textbf{Keywords:} convex vector optimization, outer approximation, parallel computation, batch cutting, convergence rate, cutting plane methods\\

\medskip

\noindent
\textbf{AMS subject classification:} 90C29, 90C25, 65Y05, 65K05.

\section{Introduction}\label{sec:intro}

The norm-minimization-based outer approximation algorithm of Ararat et al.~\cite{Ararat2022} is an established method for solving convex vector optimization problems (CVOPs) with bounded upper image \revise{(see Section~\ref{sec:prelim})} and $q$ objectives. At each iteration~$k$, the algorithm maintains a polyhedral outer approximation $P_k$ of the upper image and refines it by solving, for each vertex $v$ of $P_k$, the closest-point subproblem
\begin{equation}\label{eq:subproblem}
y^v := \arg\min_{y \in A} \|y - v\|, \qquad z^v := y^v - v,
\end{equation}
where $A$ is a compact convex slice of the upper image and $\|\cdot\|$ is a fixed norm on $\R^q$. The vertex $v^k$ maximizing $\|z^v\|$ is selected, and the supporting halfspace $H(w^k, A)$ defined by the cut normal $w^k := \nabla\|z^{v^k}\|$ is added: $P_{k+1} := P_k \cap H(w^k, A)$.

The convergence theory of this algorithm is well understood. Ararat et al.~\cite{Ararat2024} proved the Hausdorff approximation error satisfies $\delta_H(P_k, A) \mathrel{\revise{\in}} O(k^{2/(1-q)})$ for the Euclidean norm, matching the optimal rate for polyhedral approximation of smooth convex bodies~\cite{Gruber1993,Glasauer1997}. This rate was extended to fixed inner-product norms in~\cite{Alshahrani2026} and to all $\ell_p$ norms in~\cite{Alshahrani2026a}. Convergence under iteration-dependent adaptive metrics was also established in~\cite{Alshahrani2026}.

Yet the computational cost of this algorithm has received less attention. The bottleneck is the subproblem evaluation~\eqref{eq:subproblem}: at iteration $k$, \revise{the current outer approximation $P_k$ has $N_k$ vertices, and the algorithm solves the subproblem against each one, that is, $N_k$ independent convex programs}. Since $N_k$ grows with $k$ and with the dimension~$q$, the per-iteration cost dominates the overall runtime. On the other hand, the $N_k$ subproblems are independent---each depends only on the current polyhedron and the target set---so they can be solved in parallel. \revise{Although the standard algorithm caches the subproblem solutions for reuse at vertices that persist in later iterations~\cite[Algorithm~1, lines~11 and~17]{Ararat2024}, it adds only the single cut from the farthest vertex $v^k$ at each iteration, leaving the supporting halfspaces produced by the other solved subproblems unexploited.} Adding multiple cuts per iteration should reduce the total number of iterations needed.

In this paper we exploit both observations. We formalize a parallel version of the algorithm in which the $N_k$ \revise{subproblems} are distributed across $\nw$ workers (Algorithm~\ref{alg:parallel}), and we introduce a batch-cutting strategy (Algorithm~\ref{alg:batch}) that adds up to $K$ supporting halfspaces per iteration from the $K$ farthest vertices. We prove that batch cutting inherits the convergence rate $O(k^{2/(1-q)})$ of the standard algorithm (Theorem~\ref{thm:batch-rate}). The speedup comes from reducing the number of outer iterations. However, batch cutting also increases the vertex count of the outer approximation faster, since each batch adds $K$ new halfspaces instead of one. This increases the per-iteration cost at subsequent iterations, creating a tradeoff between iteration reduction and per-iteration overhead that we analyze experimentally in Section~\ref{sec:experiments}.

The outer approximation approach to CVOP was introduced by Benson~\cite{Benson1998} for multi-objective linear programming and extended to the convex case by Ehrgott et al.~\cite{Ehrgott2011} and L\"ohne et al.~\cite{Lohne2011,Lohne2014}. The norm-minimization variant~\cite{Ararat2022} eliminates the direction bias inherent in Pascoletti--Serafini scalarizations~\cite{Keskin2023}; convergence rates were established in~\cite{Ararat2024} using the $H$-sequence framework of Kamenev~\cite{Kamenev1992,Kamenev2002} and Lotov et al.~\cite{Lotov2004}.

Adding multiple cuts per iteration has a long history in scalar optimization, going back to the cutting plane methods of Kelley~\cite{KelleyJr.1960} and Cheney--Goldstein~\cite{Cheney1959}; multi-cut variants are standard in bundle methods~\cite{Hiriart-Urruty1993}. Our batch-cutting strategy brings this idea to vector optimization, \revise{where the cuts are generated at the vertices of the current outer approximation---one supporting halfspace of the upper image per vertex---rather than at successively queried points, as in the scalar cutting-plane methods above}.

Parallel algorithms for multi-objective optimization have been developed primarily in the evolutionary computation literature~\cite{Deb2002,Zhang2007}; parallel vertex enumeration was studied by Avis and Jordan~\cite{Avis2018}. To our knowledge, the present paper is the first to study parallelization and batch cutting for norm-minimization-based CVOP algorithms.

We validate the proposed algorithms on eight test problems from the CVOP literature~\cite{Ararat2024,Jahn2011,Keskin2023,VIENNET1996,Ansary2015}, spanning objective dimensions $q \in \{2,3,4,5\}$ and a range of constraint geometries. In addition to the parallel and batch techniques, we compare Euclidean and adaptive~\cite{Alshahrani2026} scalarization metrics and evaluate \revise{an LP-probe vertex strategy (Section~\ref{sec:design})} as an alternative to full enumeration\revise{, complementing prior vertex-selection rules~\cite{Keskin2023} that instead choose among the enumerated vertices}.

The remainder of the paper is organized as follows. Section~\ref{sec:prelim} recalls the convex vector optimization setting and the convergence rate framework. Section~\ref{sec:parallel} formalizes the parallel algorithm and derives the speedup model. Section~\ref{sec:batch} introduces the batch-cutting variant and proves convergence. Section~\ref{sec:experiments} presents the computational experiments. Section~\ref{sec:conclusion} concludes.

\section{Preliminaries}\label{sec:prelim}

Before presenting the parallel algorithms, we establish notation and recall the convergence rate framework from~\cite{Ararat2022,Ararat2024,Alshahrani2026a}. Throughout, $q \geq 2$ denotes the number of objectives.

We consider the convex vector optimization problem
\begin{equation}\label{P}
\tag{P}
\min \Gamma(x) \quad \text{w.r.t. } \leq_C \quad \text{s.t. } x \in X,
\end{equation}
where $C \subset \R^q$ is a closed, solid, pointed, polyhedral cone inducing the partial order $x \leq_C y \iff y - x \in C$, the set $X \subset \R^n$ is nonempty compact convex, and $\Gamma: X \to \R^q$ is continuous and $C$-convex (i.e., $\Gamma(\lambda x + (1-\lambda)x') \leq_C \lambda\,\Gamma(x) + (1-\lambda)\,\Gamma(x')$ for all $x, x' \in X$ and $\lambda \in [0,1]$). The upper image is $\mathcal{P} := \Gamma(X) + C$. For a convex set $B \subset \R^q$, we write $\ext(B)$ for its set of extreme points (vertices), and $\delta_H(B, B') := \max\bigl(\sup_{b \in B} \inf_{b' \in B'} \|b - b'\|,\; \sup_{b' \in B'} \inf_{b \in B} \|b - b'\|\bigr)$ for the Hausdorff distance between $B$ and $B'$. A supporting halfspace of a convex set $A$ with outer normal $w$ is $H(w, A) := \{y \in \R^q : w^T y \leq \sup_{a \in A} w^T a\}$.

\begin{assumption}[Standing hypotheses]\label{ass:standing}
Throughout this paper, we assume:
\begin{enumerate}
\item[(a)] $C \subset \R^q$ is a closed, solid, pointed, nontrivial, polyhedral cone.
\item[(b)] $X \subset \R^n$ is a nonempty compact convex set with $\inte X \neq \emptyset$.
\item[(c)] $\Gamma: X \to \R^q$ is continuous and $C$-convex.
\end{enumerate}
Under these conditions, we work with a compact convex slice $A$ of the upper image with $\inte A \neq \emptyset$, obtained by intersecting $\mathcal{P}$ with a halfspace as in~\cite{Ararat2022}.
\end{assumption}

We briefly recall the norm-minimization algorithm of~\cite{Ararat2022}; see~\cite{Ararat2024,Alshahrani2026,Alshahrani2026a} for detailed analyses. The algorithm maintains a sequence of polyhedral outer approximations $\{P_k\}_{k \geq 0}$ with $P_k \supseteq A$ for all $k$.

At iteration $k$:
\begin{enumerate}[label=\arabic*.]
\item Compute the vertex set $V_k := \ext(P_k)$, with $N_k := |V_k|$.
\item For each $v \in V_k$, solve the subproblem: $y^v := \arg\min_{y \in A} \|y - v\|$, and compute $z^v := y^v - v$ and $w^v := \nabla\|z^v\|$.
\item Select $v^k \mathrel{\revise{\in}} \arg\max_{v \in V_k} \|z^v\|$.
\item If $\|z^{v^k}\| \leq \varepsilon$, stop. Otherwise, set $P_{k+1} := P_k \cap H(w^k, A)$, where $w^k := w^{v^k}$.
\end{enumerate}

The computational cost of each iteration is dominated by Step~2: solving $N_k$ independent convex optimization problems. Step~1 (vertex enumeration) is comparatively cheap, accounting for less than 1\% of total runtime in our experiments.

We use the convergence rate framework of Kamenev~\cite{Kamenev1992} and Lotov et al.~\cite{Lotov2004}, as adapted to CVOP in~\cite{Ararat2024}.

\begin{definition}[$H(r,A)$-sequence of cutting~{\cite[Definition~8.3]{Lotov2004}}]\label{def:h-sequence}
Let $A \subset \R^q$ be a nonempty compact convex set and $(P_k)_{k \geq 0}$ \revise{be} a sequence of polytopes with $P_k \supseteq A$. We say $(P_k)$ is an $H(r,A)$-sequence for $r > 0$ if, for every $k \geq 0$, there exists a supporting halfspace $H_k = H(w^k, A)$ such that $P_{k+1} = P_k \cap H_k$ and
\begin{equation}\label{eq:h-property}
\delta_H(P_k, P_{k+1}) \geq r \cdot \delta_H(P_k, A).
\end{equation}
\end{definition}

\begin{proposition}[{\cite[Corollary~6.5]{Ararat2024}}]\label{prop:h-sequence}
Under Assumption~\ref{ass:standing}, the norm-minimization algorithm generates an $H(1,A)$-sequence with $\delta_H(P_k, P_{k+1}) = \delta_H(P_k, A) = \|z^{v^k}\|$.
\end{proposition}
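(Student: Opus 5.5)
The statement to prove is that the norm-minimization iteration produces $P_{k+1}=P_k\cap H(w^k,A)$ with
\[
\delta_H(P_k,P_{k+1})=\delta_H(P_k,A)=\|z^{v^k}\|,
\]
so that $(P_k)$ is an $H(1,A)$-sequence. I will prove this as a chain of three (in)equalities.

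\textbf{Step 1: identify $\delta_H(P_k,A)$.} Since $A\subseteq P_k$, the Hausdorff distance reduces to the one-sided excess
\[
\delta_H(P_k,A)=\sup_{p\in P_k} d(p,A), \qquad d(p,A):=\min_{a\in A}\|p-a\|.
\]
The function $d(\cdot,A)$ is convex and $P_k$ is a polytope, so the supremum is attained at a vertex. Hence $\delta_H(P_k,A)=\max_{v\in V_k}\|z^v\|=\|z^{v^k}\|$ by the choice of $v^k$.

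\textbf{Step 2: the cut is a supporting halfspace.} The pair $y^{v^k},w^k$ comes from the projection optimality condition. Its dual formulation gives $w^k\in\partial\|\cdot\|(z^{v^k})$ with dual norm $\|w^k\|_*=1$ and
\[
(w^k)^T y^{v^k}=\max_{a\in A}(w^k)^T a,
\]
or, under the paper's sign convention, the minimization version of this identity. This is the KKT condition of $\min_{a\in A}\|a-v\|$ and is exactly the content of the cited result of Ararat et al. It shows that $H(w^k,A)$ supports $A$ at $y^{v^k}$. In addition,
\[
(w^k)^T v^k-\max_{a\in A}(w^k)^T a=\|z^{v^k}\|
\]
with the appropriate orientation. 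So $v^k$ violates the cut by exactly $\|z^{v^k}\|$ measured in the dual-norm-normalized sense.

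\textbf{Step 3: sandwich $\delta_H(P_k,P_{k+1})$.} Since $A\subseteq P_{k+1}\subseteq P_k$, we get
\[
\delta_H(P_k,P_{k+1})=\sup_{p\in P_k} d(p,P_{k+1})\le\sup_{p\in P_k} d(p,A)=\delta_H(P_k,A).
\]
For the reverse inequality I will show $d(v^k,P_{k+1})\ge\|z^{v^k}\|$. Every $u\in P_{k+1}$ satisfies $(w^k)^T u\le\max_{a\in A}(w^k)^T a$. Combining this with Step 2 and the H\"older-type inequality $w^T x\le\|w\|_*\|x\|$ gives
\[
\|v^k-u\|\ge (w^k)^T(v^k-u)\ge\|z^{v^k}\|.
\]
Both inequalities together yield the claimed equalities.

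\textbf{Main obstacle.} The delicate step is Step 2 for a general (possibly nonsmooth) norm. There $\nabla\|z\|$ must be read as a subgradient. One needs the duality facts $\|w\|_*=1$ and $w^T z=\|z\|$ together with the sign conventions relating $z^v=y^v-v$ to the outer normal. I would handle this by writing the Lagrangian dual of the projection problem, $\max_{\|w\|_*\le1}\bigl(w^T v-\sigma_A(w)\bigr)$ up to sign, where $\sigma_A$ is the support function of $A$. Strong duality then gives the needed identity directly, with the optimal dual $w$ as the cut normal.
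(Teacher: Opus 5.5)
Your proof is correct and is essentially the standard argument behind this result. The paper gives no proof of its own, only the citation to Corollary~6.5 of \cite{Ararat2024}. In your argument, vertex maximization of the convex function $d(\cdot,A)$ gives $\delta_H(P_k,A)=\|z^{v^k}\|$, and the dual of the projection problem yields a unit-dual-norm normal supporting $A$ at $y^{v^k}$. The H\"older bound $d(v^k,P_{k+1})\ge\|z^{v^k}\|$, together with $d(\cdot,P_{k+1})\le d(\cdot,A)$, then closes the sandwich.

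Your orientation caveat is warranted. With $w^k=\nabla\|z^{v^k}\|$ one has $(w^k)^Tv^k<(w^k)^Ty^{v^k}$, so $v^k$ lies inside the paper's literal $H(w^k,A)$. The cut must therefore be read as $\{y:(w^k)^Ty\ge\min_{a\in A}(w^k)^Ta\}$. For nonsmooth norms the normal must be the dual optimum rather than an arbitrary subgradient, exactly as you propose.
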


\begin{theorem}[Improved convergence rate~{\revise{\cite[Theorem~7.2]{Ararat2024}},~\cite[Theorem~4.1]{Alshahrani2026a}}]\label{thm:rate}
Under Assumption~\ref{ass:standing}, the norm-minimization algorithm using $\|\cdot\|_p$ scalarization for any $p \in (1,\infty)$ satisfies
\begin{equation}\label{eq:rate}
\delta_H(P_k, A) \mathrel{\revise{\in}} O\bigl(k^{2/(1-q)}\bigr) \quad \text{as } k \to \infty.
\end{equation}
\end{theorem}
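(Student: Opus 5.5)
The plan is to reduce the statement to the abstract convergence theorem for $H(r,A)$-sequences of cutting due to Kamenev~\cite{Kamenev1992} and Lotov et al.~\cite[Chapter~8]{Lotov2004}. That result says: if $A$ is a convex body and $(P_k)$ is an $H(r,A)$-sequence for some fixed $r>0$, with the Hausdorff distance measured in the Euclidean norm, then $\delta_H(P_k,A)\le c\,k^{2/(1-q)}$ for all sufficiently large $k$. Here $c$ depends on $r$, $q$, $A$ and $P_0$. So it suffices to show that the iterates of the norm-minimization algorithm with $\|\cdot\|_p$ scalarization form such a sequence for a Euclidean constant $r>0$.

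\textbf{Step 1: the cuts are supporting halfspaces of $A$.} Fix $p\in(1,\infty)$. Off the origin, $\|\cdot\|_p$ is differentiable, and while the algorithm has not stopped we have $z^{v^k}\neq 0$. Hence $w^k=\nabla\|z^{v^k}\|_p$ is well defined and is a unit vector in the dual norm $\|\cdot\|_{p'}$, with $(w^k)^T z^{v^k}=\|z^{v^k}\|_p$. The first-order optimality condition for $\min_{y\in A}\|y-v\|_p$ gives $(w^k)^T(y-y^{v^k})\ge 0$ for all $y\in A$. Thus the hyperplane through $y^{v^k}$ with this normal supports $A$. Up to orienting the normal so that $A$ lies on the retained side, $P_{k+1}=P_k\cap H(w^k,A)$ is a genuine supporting cut, and $A\subseteq P_k$ holds for all $k$ by induction.

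\textbf{Step 2: the $H$-property in the $\ell_p$ metric, then in the Euclidean metric.} Proposition~\ref{prop:h-sequence} gives $\delta_H(P_k,P_{k+1})=\delta_H(P_k,A)=\|z^{v^k}\|$, i.e.\ an $H(1,A)$-sequence with respect to $\|\cdot\|_p$. For completeness, the mechanism is the following. The sup in the Hausdorff distance between a polytope and a contained convex set is attained at a vertex, so $\delta_H(P_k,A)=\max_v\|z^v\|$. Moreover, $v^k$ is cut off at dual-norm distance exactly $\|z^{v^k}\|_p$ from the new hyperplane, which forces $\delta_H(P_k,P_{k+1})\ge\|z^{v^k}\|_p$.

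To transfer this to the Euclidean setting, let $a\|x\|_2\le\|x\|_p\le b\|x\|_2$. Hausdorff distances in the two norms then satisfy the same sandwich. This gives
\[
\delta_H^{(2)}(P_k,P_{k+1})\ \ge\ b^{-1}\delta_H^{(p)}(P_k,P_{k+1})\ =\ b^{-1}\delta_H^{(p)}(P_k,A)\ \ge\ (a/b)\,\delta_H^{(2)}(P_k,A).
\]
So $(P_k)$ is an $H(a/b,A)$-sequence in the Euclidean sense. The same equivalence converts the final Euclidean rate back to $\|\cdot\|_p$, which only changes the constant.

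\textbf{Step 3: apply the Kamenev--Lotov theorem, and the main obstacle.} With $r=a/b$, $\inte A\neq\emptyset$ (Assumption~\ref{ass:standing}) and $P_0$ a polytope containing $A$, the cited theorem yields~\eqref{eq:rate}. The step I expect to be delicate is checking that the hypotheses of the abstract theorem match exactly. It is stated for cuts by Euclidean supporting halfspaces and requires the $H$-inequality at \emph{every} iteration, whereas our normals arise from a non-Euclidean gradient. The point to verify is that the Kamenev--Lotov argument uses only two facts: that each cut is a supporting halfspace of $A$, and that each cut removes a region of Euclidean Hausdorff size at least $r\,\delta_H(P_k,A)$. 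If instead it implicitly uses that the cut point is the Euclidean nearest point, I would reprove its volume/covering-number step directly. That step bounds the number of cuts through a packing argument on the boundary of a neighborhood of $A$ and needs only the removed-distance inequality from Step~2.
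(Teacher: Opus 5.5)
Your Steps~1 and~2 are sound. After fixing the orientation, each $\ell_p$ cut is a supporting halfspace of $A$, and norm equivalence does turn the $H(1,A)$-property in $\|\cdot\|_p$ into a Euclidean $H(a/b,A)$-property.

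The gap is in Step~3. The Kamenev--Lotov theorem for $H(r,A)$-sequences of cutting does \emph{not} give the exponent $2/(1-q)$; the $H$-property alone yields only $O(k^{1/(1-q)})$. The paper says so in Remark~\ref{rem:adaptive}, which calls $O(k^{1/(1-q)})$ ``the generic $H(r,A)$-sequence rate''. This is also why the improved rate was established in \cite{Ararat2024} only for the Euclidean norm, by a dedicated argument, and why the $\ell_p$ case needed \cite{Alshahrani2026a}. A consistency check confirms the problem. Your reduction uses nothing specific to $\ell_p$ beyond norm equivalence, so it would prove the improved rate for every norm. It would also cover the adaptive metrics with bounded condition number, which likewise generate $H(r,A)$-sequences. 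That is exactly the question Remark~\ref{rem:adaptive} leaves open.

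Your fallback, redoing the packing step using only the removed-distance inequality, fails for the same reason. The $H$-inequality yields separation only linear in $h_k=\delta_H(P_k,A)$. Packing $k$ points that are $c\,h_k$-separated on a bounded $(q-1)$-dimensional surface gives $k\lesssim h_k^{-(q-1)}$, i.e.\ $h_k\lesssim k^{1/(1-q)}$.

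The missing idea is separation at scale $\sqrt{h_k}$. The paper relies on the four steps of \cite[Theorem~7.2]{Ararat2024}, reproduced in the proof of Theorem~\ref{thm:batch-rate}. There one forms the deviation vectors $\alpha_i=y_i-\eta w_i$ from the support points, the cut normals and the inradius $\eta$ of $A$. One then proves $\|\alpha_i-\alpha_j\|\ge\min\{C_2\eta,\,C_3\sqrt{\eta\,h_{j-1}}\}$, so packing gives $k\lesssim h_k^{-(q-1)/2}$, hence $h_k\in O(k^{2/(1-q)})$.

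That bound uses the geometry of the nearest-point subproblem, not merely the amount each cut removes. It depends on where the later farthest vertex $v_j$ sits relative to its support point $y_j$ and normal $w_j$, while $v_j$ still lies in the earlier halfspace $H(w_i,A)$. For $p=2$ the normal is parallel to $y^v-v$. For $p\neq 2$ the gradient $\nabla\|z\|_p$ generally is not, and one needs the $\ell_p$ version of the separation lemma, which \cite[Theorem~4.1]{Alshahrani2026a} supplies. Passing to a Euclidean $H(a/b,A)$-sequence discards exactly this structure, so your route can only deliver the generic rate $O(k^{1/(1-q)})$.
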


\section{Parallel algorithm}\label{sec:parallel}

As noted in Section~\ref{sec:prelim}, the computational cost of each iteration is dominated by Step~2: solving $N_k$ independent subproblems. Since these \revise{subproblems} do not depend on each other, they can be distributed across multiple workers. We now formalize this and analyze the resulting speedup.

\begin{algorithm}[H]
\caption{Parallel Norm-Minimization Algorithm}\label{alg:parallel}
\begin{algorithmic}[1]
\Input{Problem $(\Gamma, X, C)$, slice $A$, tolerance $\varepsilon > 0$, norm $\|\cdot\|$, workers $\nw$}
\Output{$\varepsilon$-approximate outer approximation $P_k$}
\Initialize{Compute initial outer approximation $P_0 \supseteq A$ (see~\cite{Ararat2022}), set $k \gets 0$}
\While{true}
  \State Compute vertex set $V_k \gets \ext(P_k)$, $N_k \gets |V_k|$ \Comment{Sequential}
  \State \textbf{parallel for} $v \in V_k$ \textbf{using} $\nw$ \textbf{workers:} \Comment{Parallel}
  \State \quad Solve $y^v \gets \arg\min_{y \in A} \|y - v\|$
  \State \quad Compute $z^v \gets y^v - v$, $w^v \gets \nabla\|z^v\|$
  \State \textbf{end parallel for}
  \State $v^k \gets \arg\max_{v \in V_k} \|z^v\|$ \Comment{Reduction}
  \If{$\|z^{v^k}\| \leq \varepsilon$}
    \State \textbf{return} $P_k$
  \EndIf
  \State $P_{k+1} \gets P_k \cap H(w^{v^k}, A)$, $k \gets k+1$
\EndWhile
\end{algorithmic}
\end{algorithm}

\revise{We refer to Algorithm~\ref{alg:parallel} as the \emph{synchronous} parallel algorithm: at each iteration the $N_k$ subproblems are distributed across the $\nw$ workers, and the algorithm waits for all of them to complete---a synchronization barrier at the end of the parallel-for loop---before the farthest vertex is selected and its cut is added. This synchronous parallel algorithm is the baseline against which batch cutting is compared in Section~\ref{sec:experiments}.}

\begin{proposition}[Convergence of parallel algorithm]\label{prop:parallel-convergence}
Algorithm~\ref{alg:parallel} generates the same $H(1,A)$-sequence as the sequential algorithm. In particular, the convergence rate~\eqref{eq:rate} holds.
\end{proposition}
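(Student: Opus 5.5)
The plan is an induction on $k$ showing that Algorithm~\ref{alg:parallel} and the sequential algorithm of Section~\ref{sec:prelim}, started from the same $P_0$, produce identical polytopes $P_k$ at every iteration. The rate then follows from results already stated. The key observation is that parallelism changes only the order in which the $N_k$ subproblems are evaluated, never their inputs or outputs.

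\emph{Inductive step.} Suppose $P_k$ is the same in both runs. Then $V_k=\ext(P_k)$ and $N_k$ agree, since vertex enumeration in line~2 is sequential and deterministic.

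For each $v\in V_k$, the subproblem \eqref{eq:subproblem} depends only on $v$, $A$ and $\|\cdot\|$. It does not depend on any other subproblem, on which worker solves it, or on the order of completion. So each worker computes the same $y^v$, $z^v$, $w^v$ as the sequential loop.

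Two technical points must be addressed here.
\begin{itemize}
\item Uniqueness of $y^v$. It holds for strictly convex norms such as $\|\cdot\|_p$ with $p\in(1,\infty)$, because $A$ is compact and convex. For general norms one fixes the same deterministic solver or tie-breaking rule in both versions, so the equivalence is with respect to that fixed rule.
\item Well-definedness of $w^v=\nabla\|z^v\|$. This needs $z^v\neq 0$, which holds whenever the stopping test fails, since then $\|z^{v^k}\|>\varepsilon>0$.
\end{itemize}

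The synchronization barrier guarantees that the reduction in line~7 sees the complete family $\{\|z^v\|\}_{v\in V_k}$. Hence it selects $v^k\in\arg\max_{v\in V_k}\|z^v\|$ exactly as in Step~3. I expect ties to be the main subtlety. I would handle them by requiring the reduction to use the same tie-breaking rule as the sequential algorithm (say, a fixed ordering of vertices), which makes the parallel reduction order irrelevant.

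Even without matching tie-breaking, any maximizer gives $\delta_H(P_k,P_{k+1})=\delta_H(P_k,A)=\|z^{v^k}\|$ by Proposition~\ref{prop:h-sequence}. So the generated sequence is an $H(1,A)$-sequence in any case, which is all the rate result needs.

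The stopping test and the update $P_{k+1}=P_k\cap H(w^{v^k},A)$ are therefore identical in both runs, which closes the induction.

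\emph{Conclusion.} Since the sequences coincide, Proposition~\ref{prop:h-sequence} shows that Algorithm~\ref{alg:parallel} generates an $H(1,A)$-sequence. Theorem~\ref{thm:rate} then yields $\delta_H(P_k,A)\in O(k^{2/(1-q)})$ for the $\|\cdot\|_p$ norms, $p\in(1,\infty)$.
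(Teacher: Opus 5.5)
Your argument is correct and matches the paper's: the subproblem outputs $\{y^v,z^v,w^v\}_{v\in V_k}$, the selected vertex $v^k$ and the cut coincide with those of the sequential run, so the sequences $\{P_k\}$ agree and Proposition~\ref{prop:h-sequence} and Theorem~\ref{thm:rate} apply; your explicit induction and your remarks on uniqueness of $y^v$ and on tie-breaking spell out the paper's short reasoning in more detail. One small correction to your tie-breaking aside: the $H(1,A)$ property alone only gives the generic rate $O(k^{1/(1-q)})$, so the improved exponent under an arbitrary choice of maximizer holds because such a run is still a run of the norm-minimization algorithm covered by Theorem~\ref{thm:rate}, not because of the $H(1,A)$ property by itself.
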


\begin{proof}
At each iteration, the parallel for-loop computes the same values $\{y^v, z^v, w^v\}_{v \in V_k}$ as the sequential loop, only in parallel. The selection of $v^k$ and the cut $H(w^{v^k}, A)$ are identical, so the two algorithms produce the same sequence $\{P_k\}$. The claim follows from Proposition~\ref{prop:h-sequence} and Theorem~\ref{thm:rate}.
\end{proof}

Let $T_k^{\mathrm{vrep}}$ (vertex representation) and $T_k^{\mathrm{other}}$ denote the time for vertex enumeration and bookkeeping at iteration $k$, respectively. The sequential time per iteration is
\[
T_k^{\mathrm{seq}} = T_k^{\mathrm{vrep}} + N_k \cdot \bar{t}_k + T_k^{\mathrm{other}},
\]
where $\bar{t}_k$ is the average time per subproblem \deleted{solve}. With $\nw$ parallel workers:
\begin{equation}\label{eq:parallel-time}
T_k^{\mathrm{par}}(\nw) = T_k^{\mathrm{vrep}} + \left\lceil N_k / \nw \right\rceil \cdot \bar{t}_k + T_k^{\mathrm{overhead}} + T_k^{\mathrm{other}},
\end{equation}
where $T_k^{\mathrm{overhead}}$ accounts for communication and synchronization costs.

Define the \emph{serial fraction} at iteration $k$ as $f_k := (T_k^{\mathrm{vrep}} + T_k^{\mathrm{other}}) / T_k^{\mathrm{seq}}$. Then Amdahl's law~\cite{amdahlValiditySingleProcessor1967} gives the per-iteration speedup bound:
\begin{equation}\label{eq:amdahl}
S_k(\nw) := \frac{T_k^{\mathrm{seq}}}{T_k^{\mathrm{par}}(\nw)} \leq \frac{1}{f_k + (1 - f_k)/\nw}.
\end{equation}
Since $f_k < 0.01$ in our experiments, the theoretical speedup limit is close to $\nw$ for moderate $\nw$\revise{---that is, while $f_k(\nw - 1) \ll 1$, i.e., $\nw \ll 1 + 1/f_k \approx 100$, a range that comfortably includes the $\nw \leq 8$ workers used in our experiments}.

\begin{remark}[Scaling with $q$]\label{rem:scaling}
The vertex count $N_k$ grows with the objective dimension $q$. For fixed $\nw$, the ratio $N_k / \nw$ increases, improving parallel utilization and pushing the realized speedup closer to the Amdahl bound. \revise{This is broadly consistent with the speedups in Section~\ref{sec:experiments} (Table~\ref{tab:parallel}): the smallest speedup occurs on the problem with the fewest vertices per iteration (MOP7, where the speed increases by a factor of only $1.1$), while problems whose parallelizable subproblem work dominates the serial overhead approach the Amdahl bound (for example AP1, a factor of $4.2$, where expensive subproblem evaluations play the same role as a large vertex count).}
\end{remark}

\section{Batch-cutting algorithm}\label{sec:batch}

The parallel algorithm evaluates all $N_k$ vertices but uses only the single best cut. We now propose a variant that adds multiple cuts per iteration.

\begin{algorithm}[H]
\caption{Batch-Cutting Norm-Minimization Algorithm}\label{alg:batch}
\begin{algorithmic}[1]
\Input{Problem $(\Gamma, X, C)$, slice $A$, tolerance $\varepsilon > 0$, norm $\|\cdot\|$, workers $\nw$, max cuts $K$}
\Output{$\varepsilon$-approximate outer approximation $P_k$}
\Initialize{Compute initial outer approximation $P_0 \supseteq A$ (see~\cite{Ararat2022}), set $k \gets 0$}
\While{true}
  \State Compute vertex set $V_k \gets \ext(P_k)$, $N_k \gets |V_k|$
  \State \textbf{parallel for} $v \in V_k$ \textbf{using} $\nw$ \textbf{workers:}
  \State \quad Solve $y^v \gets \arg\min_{y \in A} \|y - v\|$
  \State \quad Compute $z^v \gets y^v - v$, $w^v \gets \nabla\|z^v\|$
  \State \textbf{end parallel for}
  \State $v^k \gets \arg\max_{v \in V_k} \|z^v\|$
  \If{$\|z^{v^k}\| \leq \varepsilon$}
    \State \textbf{return} $P_k$
  \EndIf
  \State Sort vertices by $\|z^v\|$ in decreasing order: $v^{(1)}, v^{(2)}, \ldots, v^{(N_k)}$ with $\|z^{v^{(1)}}\| \geq \|z^{v^{(2)}}\| \geq \cdots$
  \State Select cut set $\mathcal{C}_k := \{v^{(j)} : j = 1, \ldots, \min(K, \revise{N^\varepsilon_k})\}$
  \State $P_{k+1} \gets P_k \cap \bigcap_{v \in \mathcal{C}_k} H(w^v, A)$, $k \gets k+1$
\EndWhile
\end{algorithmic}
\end{algorithm}

At each iteration, Algorithm~\ref{alg:batch} adds up to $K$ supporting halfspaces simultaneously, one for each of the $K$ vertices farthest from $A$ (i.e., those with the largest $\|z^v\|$). \revise{Here $N^\varepsilon_k := |\{v \in \ext(P_k) : \|z^v\| > \varepsilon\}|$ denotes the number of vertices whose deviation exceeds the tolerance---the eligible cuts at iteration~$k$---so the batch adds $\min(K, N^\varepsilon_k)$ of them.} The cuts are valid since each $H(w^v, A)$ is a supporting halfspace of $A$ (by the optimality of the subproblem). When $K = 1$, Algorithm~\ref{alg:batch} reduces to Algorithm~\ref{alg:parallel}.

We show that batch cutting preserves the convergence rate of the standard algorithm. The idea is simple: each outer iteration of Algorithm~\ref{alg:batch} still includes \revise{the cut corresponding to the farthest vertex}---the same cut that the standard algorithm would make---and the additional cuts can only help.

\begin{theorem}[Convergence rate of batch cutting]\label{thm:batch-rate}
Under Assumption~\ref{ass:standing}, let $\|\cdot\| = \|\cdot\|_p$ for some $p \in (1,\infty)$. The sequence $\{P_k\}_{k \geq 0}$ generated by Algorithm~\ref{alg:batch} satisfies
\begin{equation}\label{eq:batch-rate}
\delta_H(P_k, A) \mathrel{\revise{\in}} O\bigl(k^{2/(1-q)}\bigr),
\end{equation}
where $k$ is the number of outer iterations.
\end{theorem}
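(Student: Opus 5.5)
We plan to reduce Theorem~\ref{thm:batch-rate} to the single-cut theory by inserting the extra cuts as additional steps of a (finer) cutting sequence. Fix an outer iteration $k$ of Algorithm~\ref{alg:batch} and let $m_k := \min(K, N^\varepsilon_k) \ge 1$ be the batch size. Define intermediate polytopes $Q_{k,0} := P_k$ and $Q_{k,j} := Q_{k,j-1} \cap H(w^{v^{(j)}}, A)$ for $j = 1,\dots,m_k$, so that $Q_{k,m_k} = P_{k+1}$. Concatenating these over all $k$ gives a sequence $(Q_\ell)_{\ell \ge 0}$ of polytopes containing $A$, each obtained from the previous one by intersecting with a supporting halfspace of $A$. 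Its total length after $k$ outer iterations is $L_k := \sum_{i<k} m_i \le K k$.

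The first step is the first sub-cut of each batch. Since $v^{(1)} = v^k$, Proposition~\ref{prop:h-sequence} gives $\delta_H(P_k, Q_{k,1}) = \delta_H(P_k, A) = \|z^{v^k}\|$. So these steps satisfy the $H(1,A)$ property~\eqref{eq:h-property} with $r=1$.

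The main obstacle is the remaining sub-cuts $j \ge 2$. Their halfspaces were computed from vertices of $P_k$, not from the farthest vertex of $Q_{k,j-1}$, so the $H(r,A)$ inequality need not hold for them, and Definition~\ref{def:h-sequence} cannot be applied verbatim to $(Q_\ell)$. I would avoid needing it. The idea is to use monotonicity: since $A \subseteq P_{k+1} \subseteq Q_{k,1} \subseteq P_k$, Hausdorff distance to $A$ is monotone under inclusion, so $\delta_H(P_{k+1}, A) \le \delta_H(Q_{k,1}, A)$. I would then re-examine the proof of Theorem~\ref{thm:rate} in \cite{Ararat2024,Alshahrani2026a}, which follows Kamenev and Lotov et al. That argument is a volume/covering count: each $H(1,A)$ cut at a point at distance $\delta$ removes from the current polytope a cap of size bounded below in terms of $\delta$, inside a fixed neighbourhood of $A$, and these caps are disjoint across iterations. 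I expect the argument to need only two facts: each step contains a cut realizing $\delta_H(P_k,A)$ at the farthest vertex, and the polytopes are nested and contain $A$. Both hold for $(P_k)$ under batch cutting, since the extra halfspaces only shrink the polytope while keeping $A$ inside. The removed caps stay disjoint because each cap of iteration $k$ lies in $P_k \setminus P_{k+1}$, and these sets are disjoint in $k$.

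Then the counting bound $\sum_{i<k} \delta_H(P_i,A)^{(q-1)/2}\lesssim$ const, combined with the monotonically nonincreasing error, yields $\delta_H(P_k,A) \in O(k^{2/(1-q)})$ exactly as in Theorem~\ref{thm:rate}, with $k$ the number of outer iterations. If instead the cited theorem must be used as a black box, I would fall back on a comparison argument. I would compare $P_k$ with the standard-algorithm sequence started at $P_k$, and note that $\delta_H(Q_{k,1},A)$ is the value after one standard step. This fallback is the step whose rigour I expect to depend most on the internal structure of the cited proof. The $\ell_p$ hypothesis $p\in(1,\infty)$ enters only through Proposition~\ref{prop:h-sequence} and Theorem~\ref{thm:rate}, which ensure $w^v$ is well defined and the cut is supporting.
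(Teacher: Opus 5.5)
Your reduction targets the right facts, and they are the ones the paper uses: every batch contains the farthest-vertex cut, the polytopes stay nested around $A$, and the extra cuts never need to satisfy the $H(r,A)$ inequality. The gap is the counting step, which carries the whole exponent. You assert $\sum_{i<k} h_i^{(q-1)/2} \lesssim \text{const}$ with $h_i=\delta_H(P_i,A)$. This inequality is in fact false in general. On the Ball problem, $P_i$ has at most $J+1+Ki$ facets. Circumscribed polytopes with $m$ facets around a body with a positively curved boundary patch are at Hausdorff distance $\gtrsim m^{-2/(q-1)}$. Hence $h_i^{(q-1)/2}\gtrsim 1/(J+1+Ki)$, and the sum grows like $\log k$.

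The disjoint-cap volume count you describe gives something weaker still. The region cut off at a vertex $v$ with $\|z^v\|=\delta$ is only guaranteed to contain the part of $\mathrm{conv}(\{v\}\cup A)$ beyond the cutting hyperplane. For a ball this is a cone of height $\delta$ and base radius of order $\sqrt{\delta}$, so its volume is of order $\delta^{(q+1)/2}$; near a corner of $A$ it is only of order $\delta^{q}$. Disjointness in $P_0\setminus A$ then yields at best $\sum_i h_i^{(q+1)/2}\lesssim 1$, i.e.\ $h_k\in O(k^{-2/(q+1)})$. That is strictly weaker than $O(k^{2/(1-q)})$: for $q=2$ it gives $k^{-2/3}$ instead of $k^{-2}$. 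The cited single-cut proof is not a volume argument, so your ``exactly as in the rate theorem'' step has nothing valid to rest on.

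The missing idea is the single-scale packing argument the paper takes from Ararat et al. Let $\eta$ be the inradius of $A$, and let $(y_i,w_i)$ be the support point and unit normal of the farthest-vertex cut at iteration $i$. The deviation vectors $\alpha_i:=y_i-\eta w_i$ satisfy $\|\alpha_i-\alpha_j\|\ge\min\{C_2\eta,\,C_3\sqrt{\eta\,h_{j-1}}\}$ for $i<j$. The proof of this separation needs the farthest vertex of $P_j$ to lie in $H(w_i,A)$. Under batching this follows from $P_j\subseteq P_{i+1}\subseteq H(w_i,A)$, and this is where your nesting observation has to be fed in.

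Since $(h_k)$ is non-increasing and tends to $0$, for large $k$ the vectors $\alpha_0,\dots,\alpha_{k-1}$ are pairwise $C_3\sqrt{\eta h_k}$-separated. The $(q-1)$-dimensional boundary packing bound then gives $k\le C\,h_k^{-(q-1)/2}$, which is the rate. This is a bound at the current scale $h_k$, not a sum over scales. The hypothesis $p\in(1,\infty)$ also enters here, through strict convexity (distinct support points) and the constants $C_2,C_3$, not only through well-definedness of $w^v$.

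Your fallback does not close the gap either. Writing $v(P)$ for the farthest vertex of $P$, the single-cut update $P\mapsto P\cap H(w^{v(P)},A)$ is not monotone under inclusion, because the farthest vertex and its cut depend on the whole polytope. So $\delta_H(P_{k+1},A)\le\delta_H(Q_{k,1},A)$ cannot be chained into a comparison with the $k$-th standard iterate from $P_0$. Restarting the standard algorithm at each $P_k$ also resets its iteration counter.
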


\begin{proof}
The proof applies the four-step strategy of~\revise{\cite[Theorem~7.2]{Ararat2024}} to the farthest-vertex cuts produced by Algorithm~\ref{alg:batch}.

At each outer iteration $k$, Algorithm~\ref{alg:batch} selects the farthest vertex $v^{(1)}_k \in \arg\max_{v \in V_k} \|z^v\|$ and includes $H(w^{(1)}_k, A)$ as the first cut in the batch $\mathcal{C}_k$. By Proposition~\ref{prop:h-sequence}, this cut satisfies
\begin{equation}\label{eq:farthest-h}
\delta_H(P_k,\, P_k \cap H(w^{(1)}_k, A)) = \delta_H(P_k, A) =: h_k.
\end{equation}
Since $P_{k+1} = P_k \cap \bigcap_{v \in \mathcal{C}_k} H(w^v, A) \subseteq P_k \cap H(w^{(1)}_k, A)$, we have $h_{k+1} \leq h_k$, so $\{h_k\}$ is non-increasing.

\medskip
\noindent\emph{Step 1: Residual counting.}
By strict convexity of the norm, each farthest-vertex cut produces a new, distinct support point $y^{(1)}_k \in \partial A$ with cut normal $w^{(1)}_k$ \revise{(see the proof of~\cite[Lemma~7.7]{Ararat2024})}. After $k$ outer iterations, the algorithm has generated at least $k + J + 1$ distinct support points from farthest-vertex cuts, where $J + 1$ is the number of halfspaces defining $P_0$~\revise{\cite[Lemma~7.7]{Ararat2024}}.

\medskip
\noindent\emph{Step 2: Separation of deviation vectors.}
Let $\eta > 0$ denote the inradius of $A$ (the radius of the largest ball contained in $A$) and define the deviation vectors $\alpha_i := y^{(1)}_i - \eta\, w^{(1)}_i$ for $i = 0, \ldots, k-1$, corresponding to the farthest-vertex cut at each outer iteration. The separation bound~\revise{\cite[Lemma~7.5]{Ararat2024}} (see also~\cite[Theorem~4.1]{Alshahrani2026a}) applies: for all distinct pairs $i < j$,
\begin{equation}\label{eq:batch-separation}
\|\alpha_i - \alpha_j\| \geq \min\bigl\{C_2\, \eta,\; C_3 \sqrt{\eta \, h_{j-1}}\bigr\},
\end{equation}
\revise{where $C_2 = \sqrt{2}$ and $C_3 \in (0,1)$ are the explicit geometric constants of~\cite[Lemma~7.5]{Ararat2024}: the floor $C_2\eta$ comes from $\|w^{(1)}_i - w^{(1)}_j\|^2 = 2$ for the unit cut normals when $(w^{(1)}_i)^\top w^{(1)}_j \leq 0$, and the shrinking term $C_3\sqrt{\eta h}$ comes from the bound $\sqrt{\eta h} - h$, so any $C_3 \in (0,1)$ is admissible once $h$ is small. For a general $\ell_p$ norm the same constants are recovered up to the $\ell_p$--$\ell_2$ norm-equivalence factors~\cite[Theorem~4.1]{Alshahrani2026a}; in every case they depend only on the unit-ball geometry (hence the norm) and the dimension $q$, not on the problem instance.} The proof of~\eqref{eq:batch-separation} in~\revise{\cite[Lemma~7.5]{Ararat2024}} uses only three properties of the polytope sequence, all of which hold for the batch algorithm:
\begin{enumerate}
\item the farthest-vertex selection rule is the same as in the standard algorithm, so the optimality conditions and support-point geometry at each outer iteration are identical;
\item the polytopes are nested: $P_0 \supseteq P_1 \supseteq \cdots \supseteq A$, since each $P_{k+1}$ is obtained from $P_k$ by intersecting with valid supporting halfspaces of $A$;
\item each vertex $v^{(1)}_j$ of $P_j$ lies in all previously added halfspaces: since $P_j \subseteq P_i \cap H(w^{(1)}_i, A)$ for $i < j$ (by nesting and the fact that $H(w^{(1)}_i, A) \in \mathcal{C}_i$), we have $v^{(1)}_j \in H(w^{(1)}_i, A)$ for all $i < j$.
\end{enumerate}
\revise{Although the batch sequence $\{P_k\}$ is not an $H(r,A)$-sequence in the sense of Definition~\ref{def:h-sequence} (each outer step adds several halfspaces), the bound~\eqref{eq:batch-separation} uses only properties (i)--(iii) and not the single-cut structure, so it still applies. We extract a uniform separation as follows. Fix $k$ and take any pair $i < j \leq k$. Since $\{h_k\}$ is non-increasing and $j-1 \leq k-1$, we have $h_{j-1} \geq h_k$, hence $C_3\sqrt{\eta h_{j-1}} \geq C_3\sqrt{\eta h_k}$ and, by~\eqref{eq:batch-separation}, $\|\alpha_i - \alpha_j\| \geq \min\{C_2\eta,\, C_3\sqrt{\eta h_k}\}$. Because $h_k \to 0$---each outer iteration includes the farthest-vertex cut, which alone forces convergence---there is an index $K_0$ with $C_3\sqrt{\eta h_k} \leq C_2\eta$ for all $k \geq K_0$; for such $k$ the minimum equals $C_3\sqrt{\eta h_k}$, so the $k$ farthest-vertex deviation vectors $\alpha_0, \ldots, \alpha_{k-1}$ are pairwise $\varepsilon_k$-separated with $\varepsilon_k := C_3\sqrt{\eta h_k}$.}

\medskip
\noindent\emph{Step 3: Packing bound.}
The $k$ deviation vectors are $\varepsilon_k$-separated and lie in a ball of radius $R + \eta$, where $R := \sup_{a \in A} \|a\|$ bounds the size of $A$. \revise{Because the support points $y^{(1)}_i \in \partial A$ that generate these vectors lie on the $(q-1)$-dimensional boundary of $A$, the pairwise $\varepsilon_k$-separated deviation vectors obey the boundary packing (dispersion) estimate that underlies the optimal $n^{-2/(q-1)}$ polytopal approximation order; see Gruber~\cite[ineq.~(1.1)--(1.2)]{Gruber1993} and Glasauer--Gruber~\cite[Thm.~2]{Glasauer1997}. The corresponding inequality in~\cite{Ararat2024} is the packing bound~\cite[Lemma~7.10]{Ararat2024}, and it gives}
\[
k \leq C_4 \cdot \left(\frac{R + \eta}{\varepsilon_k}\right)^{q-1}.
\]

\medskip
\noindent\emph{Step 4: Assembly.}
\revise{Substituting $\varepsilon_k = C_3\sqrt{\eta h_k}$ into the packing bound of Step~3 gives $k \leq C\, h_k^{-(q-1)/2}$, where the packing constant $C_4$ depends only on $q$ (it equals $q\pi_q/\pi_{q-1}$ in~\cite[Lemma~7.10]{Ararat2024}) and $C := C_4 (R+\eta)^{q-1} / (C_3^{\,q-1}\,\eta^{(q-1)/2})$ collects the constants of Steps~2--3, depending only on the norm, the dimension $q$, and the circumradius $R$ and inradius $\eta$ of $A$. Solving for $h_k$ gives}
\[
h_k \mathrel{\revise{\in}} O\bigl(k^{2/(1-q)}\bigr). \qedhere
\]
\end{proof}

\begin{remark}[Adaptive metrics]\label{rem:adaptive}
Theorem~\ref{thm:batch-rate} assumes a fixed $\ell_p$ norm. When the scalarization uses an iteration-dependent inner-product norm $\|z\|_{M_k}$ (the adaptive metric of~\cite{Alshahrani2026}), the Euclidean Hausdorff error still converges to zero provided the matrices $M_k$ satisfy uniform spectral bounds $mI \preceq M_k \preceq MI$, and the generic $H(r,A)$-sequence rate $O(k^{1/(1-q)})$ holds with \revise{$r = 1/\bigl(\sup_k \sqrt{\lambda_{\max}(M_k)/\lambda_{\min}(M_k)}\bigr)^3$}; see~\cite{Alshahrani2026}. Whether the improved exponent $2/(1-q)$ extends to iteration-dependent norms---either in the single-cut or batch setting---remains an open question: the deviation-vector separation argument of~\revise{\cite[Lemma~7.5]{Ararat2024}} requires geometric properties of the dual norm ball that change when the metric varies across iterations.
\end{remark}

\begin{remark}[Role of additional batch cuts]\label{rem:batch-improvement}
The rate $O(k^{2/(1-q)})$ in Theorem~\ref{thm:batch-rate} is stated in terms of outer iterations \revise{with the same exponent as} the standard algorithm~\revise{\cite[Theorem~7.2]{Ararat2024}}. However, batch cutting typically requires \emph{fewer} outer iterations to reach a given tolerance $\varepsilon$, because the additional $K - 1$ cuts per iteration further tighten the approximation: $h_{k+1} = \delta_H(P_{k+1}, A) \leq \delta_H(P_k \cap H(w^{(1)}_k, A), A)$, with the inequality potentially strict when any of the additional cuts are non-redundant. Each subsequent outer iteration then starts from a better approximation. This cumulative effect is confirmed by the iteration reductions reported in Section~\ref{sec:experiments}.
\end{remark}

We now analyze the combined effect of $\nw$-worker parallelism and $K$-batch cutting.

\begin{proposition}[Combined speedup]\label{prop:combined-speedup}
Let $S_\nw$ denote the parallel speedup from $\nw$ workers (equation~\eqref{eq:amdahl}), and let $\rho := k_{\mathrm{seq}} / k_{\mathrm{batch}} \geq 1$ denote the iteration reduction factor, where $k_{\mathrm{seq}}$ and $k_{\mathrm{batch}}$ are the iteration counts of the standard and batch algorithms to reach the same tolerance $\varepsilon$. If the average per-iteration cost were equal for both algorithms, the effective wall-clock speedup would be
\begin{equation}\label{eq:effective-speedup}
S_{\mathrm{eff}} = \rho \cdot S_\nw.
\end{equation}
\end{proposition}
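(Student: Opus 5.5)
The plan is a direct bookkeeping argument: express both total wall-clock times as (number of outer iterations) $\times$ (average cost per iteration), and take the ratio. Let $\bar T^{\mathrm{seq}}$ denote the average per-iteration wall-clock time of the standard sequential algorithm over its $k_{\mathrm{seq}}$ iterations, so its total time is $T_{\mathrm{seq}} = k_{\mathrm{seq}}\,\bar T^{\mathrm{seq}}$. For the batch algorithm run on $\nw$ workers, write its total time as $T_{\mathrm{batch}}(\nw) = k_{\mathrm{batch}}\,\bar T^{\mathrm{batch,par}}(\nw)$, where $\bar T^{\mathrm{batch,par}}(\nw)$ is its average per-iteration time.

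The key step is relating $\bar T^{\mathrm{batch,par}}(\nw)$ to $\bar T^{\mathrm{seq}}$. Under the stated hypothesis that the average per-iteration cost is the same for the two algorithms, the unparallelized batch iteration costs $\bar T^{\mathrm{seq}}$ on average. Running it on $\nw$ workers is then the same as the parallel algorithm. I would apply the per-iteration model~\eqref{eq:parallel-time}, aggregated across iterations, to conclude that the parallel per-iteration time equals $\bar T^{\mathrm{seq}}/S_\nw$, with $S_\nw$ the realized (averaged) speedup bounded by~\eqref{eq:amdahl}. This requires reading $S_\nw$ as the ratio of average per-iteration times rather than a single-iteration $S_k$. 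I would state this interpretation explicitly, since $S_k$ varies with $N_k$; a weighted harmonic-type average of the $S_k$ is exactly what makes the identity hold.

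Assembling the pieces gives
\[
S_{\mathrm{eff}} = \frac{T_{\mathrm{seq}}}{T_{\mathrm{batch}}(\nw)} = \frac{k_{\mathrm{seq}}\,\bar T^{\mathrm{seq}}}{k_{\mathrm{batch}}\,\bar T^{\mathrm{seq}}/S_\nw} = \rho\, S_\nw.
\]
The inequality $\rho \ge 1$ is not needed for the identity itself. It is supported heuristically by Theorem~\ref{thm:batch-rate} and Remark~\ref{rem:batch-improvement}, and it holds because every batch iteration contains the farthest-vertex cut.

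The main obstacle is not computational but definitional: making precise which ``average'' is meant, so that equality rather than an inequality results. I would handle this by defining $S_\nw$ directly as the ratio of the average sequential to the average parallel per-iteration cost. I would then remark that in practice batch cutting inflates $N_k$, so that the equal-cost hypothesis fails. In that case the formula should be corrected by a factor $\bar T^{\mathrm{seq}}/\bar T^{\mathrm{batch}} \le 1$, which is exactly the tradeoff noted in the introduction.
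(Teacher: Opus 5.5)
Your bookkeeping argument is correct and is exactly the reasoning the paper leaves implicit: the proposition is stated without proof, as the immediate consequence $S_{\mathrm{eff}} = k_{\mathrm{seq}}\bar T^{\mathrm{seq}}/\bigl(k_{\mathrm{batch}}\,\bar T^{\mathrm{seq}}/S_\nw\bigr) = \rho\,S_\nw$ of writing total time as iterations times average per-iteration cost, and your reading of $S_\nw$ as a ratio of averaged per-iteration times (a cost-weighted harmonic mean of the $S_k$, not the single-iteration bound~\eqref{eq:amdahl}) usefully pins down what the paper intends. The one misstep is your aside that $\rho \ge 1$ ``holds because every batch iteration contains the farthest-vertex cut'': that only compares a batch step with a single cut applied to the batch algorithm's own polytope, but after the first iteration the batch and standard trajectories diverge (the standard algorithm's farthest vertex may already have been removed by the extra batch cuts), so no inequality $\delta_H(P_k^{\mathrm{batch}},A) \le \delta_H(P_k^{\mathrm{std}},A)$ follows, and $\rho \ge 1$ must be kept as a hypothesis, as the paper does (cf.\ ``typically'' in Remark~\ref{rem:batch-improvement}).
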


\begin{remark}
The equal-cost assumption in Proposition~\ref{prop:combined-speedup} is only approximate. At a given iteration~$k$, both algorithms solve all $N_k$ subproblems, so the cost of iteration~$k$ itself is the same. Yet batch cutting adds $K$ halfspaces per iteration instead of one, so the vertex count $N_k$ grows faster and subsequent iterations are more expensive. The effective speedup $S_{\mathrm{eff}}$ is therefore generally less than $\rho \cdot S_\nw$. Our experiments show that $\rho \approx 2.7$--$5$ for $K = 5$, but the wall-clock speedup ranges from substantial gains on problems with moderate vertex growth \revise{(e.g., Ellipsoid3D and MOP7, with 73\% time savings)} to slowdowns on problems where the vertex count grows rapidly with each added cut \revise{(e.g., Ellipsoid4D and Distance); see Table~\ref{tab:batch}}.
\end{remark}

\section{Numerical experiments}\label{sec:experiments}

We now evaluate the parallel and batch-cutting algorithms on eight convex vector optimization problems. The experiments address four questions:
\begin{itemize}
\item \revise{how much speedup does parallelism deliver?}
\item \revise{how many iterations does batch cutting save, and does this translate to wall-clock gains?}
\item \revise{how does the batch size~$K$ affect the tradeoff between iteration reduction and per-iteration cost?}
\item \revise{how do these findings change with the objective dimension~$q$?}
\end{itemize}
We also compare the Euclidean norm against the adaptive metric~\cite{Alshahrani2026} and evaluate \revise{an LP-probe vertex strategy} as \revise{an alternative} to full enumeration.

\subsection{Implementation and hardware}\label{sec:setup}

All experiments were run on an Intel Core i9-9900K (8 physical cores, 3.60~GHz) with 32~GB RAM, using MATLAB R2025b with the Parallel Computing Toolbox. \revise{The parallel for-loop over vertices in Algorithm~\ref{alg:parallel} is implemented with MATLAB's \texttt{parfor} construct, which distributes the loop iterations across the worker pool.} Subproblems were solved using CVX~\cite{cvx} with the SDPT3 backend~\cite{Toh1999} at the default precision setting. Vertex enumeration used BENSOLVE Tools~\cite{bensolve}. The Hausdorff distance $\delta_H(P_k, A)$ was computed as $\max_{v \in \ext(P_k)} \|z^v\|$, which equals $\delta_H(P_k, A)$ by~\cite[Corollary~6.5]{Ararat2024}.

\subsection{Test problems}\label{sec:problems}

We test on eight problems from the CVOP literature, all with ordering cone $C = \R^q_+$. The problems span objective dimensions $q \in \{2,3,4,5\}$, decision space dimensions $n \in \{2,\ldots,6\}$, and constraint types ranging from spherical and ellipsoidal to polyhedral and box constraints. Table~\ref{tab:problems} summarizes the test suite.

\begin{example}[Ball~{\cite{Ararat2024}}]\label{ex:ball}
Minimize $\Gamma(x) = x$ with respect to $\leq_{\R^q_+}$ subject to $\|x - e\|_2 \leq 1$, where $e = (1,\ldots,1)^T \in \R^q$. The decision and objective spaces coincide ($n = q$), and the upper image boundary is a portion of the unit sphere centered at~$e$. This problem serves as the primary scaling benchmark since $q$ can be varied freely. We test with $q \in \{2, 3, 4, 5\}$.
\end{example}

\begin{example}[Distance~{\cite{Ararat2024}}]\label{ex:distance}
Let $a_1 = (1,1)^T$, $a_2 = (2,3)^T$, $a_3 = (4,2)^T \in \R^2$, and define
\[
\Gamma(x) = \bigl(\|x - a_1\|_2^2,\; \|x - a_2\|_2^2,\; \|x - a_3\|_2^2\bigr)^T.
\]
Minimize $\Gamma(x)$ with respect to $\leq_{\R^3_+}$ subject to $x_1 + 2x_2 \leq 10$, $0 \leq x_1 \leq 10$, $0 \leq x_2 \leq 4$. Here $n = 2$ and $q = 3$.
\end{example}

\begin{example}[Jahn~{\cite[Example~11.4]{Jahn2011}}]\label{ex:jahn}
Minimize $\Gamma(x) = (-x_1,\; x_1 + x_2^2)^T$ with respect to $\leq_{\R^2_+}$ subject to $x_1^2 - x_2 \leq 0$ and $x_1 + 2x_2 \leq 3$. Here $n = q = 2$. The nonlinear constraints and non-identity objective map distinguish this from the Ball problem.
\end{example}

\begin{example}[Ellipsoid~{\cite[Examples~5.3,~5.4]{Keskin2023}}]\label{ex:ellipsoid}
Minimize $\Gamma(x) = x$ with respect to $\leq_{\R^q_+}$ subject to
\[
\sum_{i=1}^{q} \left(\frac{x_i - 1}{d_i}\right)^2 \leq 1,
\]
where $d = (1, a, 5)^T$ for $q = 3$ and $d = (1, a, 5, 1)^T$ for $q = 4$. The semi-axis parameter~$a$ controls the eccentricity of the ellipsoid. We use $a = 10$ for $q = 3$ and $a = 5$ for $q = 4$, following~\cite{Keskin2023}. Like the Ball problem, the decision and objective spaces coincide ($n = q$), but the anisotropic boundary geometry tests the algorithm's behavior on non-spherical upper images. \revise{Concretely, the semi-axes differ by the factor~$a$, so the boundary curvature varies strongly with direction---in contrast to the sphere, whose curvature is uniform---and the support points, and hence the vertices of the outer approximation, concentrate in the high-curvature regions rather than distributing evenly. This stresses the vertex-enumeration and selection steps.}
\end{example}

\begin{example}[MOLP~{\cite[Example~5.5]{Keskin2023}}]\label{ex:molp}
Minimize $\Gamma(x) = Fx$ with respect to $\leq_{\R^q_+}$ subject to $Gx \leq b$ and $-100 \leq x_i \leq 100$, where $F \in \R^{q \times 2q}$, $G \in \R^{4q \times 2q}$, and $b \in \R^{4q}$ are generated randomly with a fixed seed for reproducibility ($F_{ij}, G_{ij} \sim \lceil 100 \cdot \mathcal{N}(0,1) \rceil$, $b_i \sim \lceil 10 \cdot \mathcal{U}(0,1) \rceil$). This multi-objective linear program tests scalability: the number of decision variables and constraints grows with~$q$. We test with $q = 3$.
\end{example}

\begin{example}[MOP7~{\cite{VIENNET1996}}]\label{ex:mop7}
Minimize $\Gamma(x) = (f_1(x), f_2(x), f_3(x))^T$ with respect to $\leq_{\R^3_+}$ subject to $-5 \leq x_i \leq 5$, $i = 1, 2$, where
\begin{align*}
f_1(x) &= \tfrac{1}{2}(x_1 - 2)^2 + \tfrac{1}{13}(x_2 + 1)^2 + 3, \\
f_2(x) &= \tfrac{1}{36}(x_1 + x_2 - 3)^2 + \tfrac{1}{8}(-x_1 + x_2 + 2)^2 - 17, \\
f_3(x) &= \tfrac{1}{175}(x_1 + 2x_2 - 1)^2 + \tfrac{1}{17}(-x_1 + 2x_2)^2 - 13.
\end{align*}
All three objectives are convex quadratic with $n = 2$, $q = 3$.
\end{example}

\begin{example}[AP1 \revise{(Ansary--Panda problem~1)}~{\cite[Example~1]{Ansary2015}}]\label{ex:ap1}
Minimize $\Gamma(x) = (f_1(x), f_2(x), f_3(x))^T$ with respect to $\leq_{\R^3_+}$ subject to $-5 \leq x_i \leq 5$, $i = 1, 2$, where
\begin{align*}
f_1(x) &= \tfrac{1}{4}\bigl[(x_1 - 1)^4 + 2(x_2 - 2)^4\bigr], \\
f_2(x) &= \exp\!\bigl(\tfrac{x_1 + x_2}{2}\bigr) + x_1^2 + x_2^2, \\
f_3(x) &= \tfrac{1}{6}\bigl[\exp(-x_1) + 2\exp(-x_2)\bigr].
\end{align*}
The objectives mix quartic, exponential, and negative-exponential terms---all convex---with $n = 2$, $q = 3$. The non-polynomial structure tests solver robustness.
\end{example}

\begin{table}[H]
\centering
\caption{Summary of test problems. \revise{Fifth column} ``Type'' indicates the structure of the objective map; \revise{last column} ``Constraints'' describes the feasible set.}\label{tab:problems}
\begin{tabular}{llccll}
\toprule
Problem & Source & \revise{\shortstack{Dimension of\\ objective space ($q$)}} & \revise{\shortstack{Dimension of\\ decision space ($n$)}} & Type & Constraints \\
\midrule
Ball & \cite{Ararat2024} & 2--5 & $q$ & Identity & Sphere \\
Distance & \cite{Ararat2024} & 3 & 2 & Quadratic & Polytope \\
Jahn & \cite{Jahn2011} & 2 & 2 & Nonlinear & Nonlinear + linear \\
Ellipsoid 3D & \cite{Keskin2023} & 3 & 3 & Identity & Ellipsoid ($a=10$) \\
Ellipsoid 4D & \cite{Keskin2023} & 4 & 4 & Identity & Ellipsoid ($a=5$) \\
MOLP & \cite{Keskin2023} & 3 & $2q$ & Linear & Polytope \\
MOP7 & \cite{VIENNET1996} & 3 & 2 & Quadratic & Box \\
AP1 & \cite{Ansary2015} & 3 & 2 & Quartic + exp & Box \\
\bottomrule
\end{tabular}
\end{table}

\subsection{Experimental setup}\label{sec:design}

\revise{The algorithm terminates once the Hausdorff approximation error satisfies $\delta_H(P_k, A) \leq \varepsilon$, where $\varepsilon > 0$ is the convergence tolerance.} This tolerance is set relative to the initial approximation error $\delta_0 := \delta_H(P_0, A)$, which varies by several orders of magnitude across problems (from $\delta_0 \approx 0.7$ for Ball $q\!=\!3$ to $\delta_0 \approx 18{,}000$ for MOLP). We set $\varepsilon = \varepsilon_{\mathrm{rel}} \cdot \delta_0$, where $\varepsilon_{\mathrm{rel}}$ accounts for the slower convergence at higher~$q$: $\varepsilon_{\mathrm{rel}} = 0.5\%$ for $q = 2$, $1\%$ for $q = 3$, $2\%$ for $q = 4$, and $5\%$ for $q = 5$.

We test six algorithm configurations, obtained by combining two scalarization metrics \revise{(Euclidean and adaptive~\cite{Alshahrani2026})}, \revise{each} with three vertex strategies \revise{(full enumeration, LP probes, and LP probes with periodic full enumeration)}, as summarized in Table~\ref{tab:configs}. The Euclidean norm uses the fixed $\|z\|_2$ in the subproblem~\eqref{eq:subproblem}. The adaptive metric~\cite{Alshahrani2026} replaces $\|z\|_2$ with an iteration-dependent inner-product norm $\|z\|_{M_k} := \sqrt{z^T M_k z}$, where $M_k := \varepsilon_0 I + \frac{1}{k}\sum_{i=1}^{k} u_i u_i^T$ is built from the accumulated cut normals $u_1, \ldots, u_k$ and a regularization parameter $\varepsilon_0 > 0$. The changing metric adapts to the local geometry of the upper image boundary, \revise{improving the multiplicative constant in the convergence bound---which decreases as the cut-normals disperse, i.e., as $r = 1/\bigl(\sup_k \sqrt{\lambda_{\max}(M_k)/\lambda_{\min}(M_k)}\bigr)^3$ increases toward~$1$---while preserving the generic rate exponent $1/(1-q)$~\cite[Theorem~18]{Alshahrani2026}}. The approximation error is always measured in the fixed Euclidean norm; only the scalarization subproblem uses $M_k$. Three vertex-finding strategies are compared. The \emph{full enumeration} strategy computes $\ext(P_k)$ at every iteration and evaluates all vertices. The \emph{LP probe} strategy avoids full enumeration: instead of computing $\ext(P_k)$, it solves a small number of LPs over $P_k$ in selected probe directions (the standard basis vectors $e_1, \ldots, e_q$ and the accumulated cut normals $w^1, \ldots, w^{k-1}$). Each LP returns a vertex of $P_k$ that is extremal in that direction and likely far from $A$, so a subset of informative vertices is identified without enumerating the full vertex set. The \emph{hybrid} strategy alternates LP probes with periodic full enumeration (every 50 iterations) to guard against missing vertices that no probe direction reaches.

\begin{table}[H]
\centering
\caption{Six algorithm configurations combining two scalarization metrics with three vertex-finding strategies.}\label{tab:configs}
\begin{tabular}{lll}
\toprule
Configuration & Scalarization metric & Vertex strategy \\
\midrule
Euclid+Full & Euclidean ($\ell_2$) & Full enumeration \\
Adapt+Full & Adaptive~\cite{Alshahrani2026} & Full enumeration \\
Euclid+Hybrid & Euclidean ($\ell_2$) & LP probes + periodic full \\
Euclid+LP & Euclidean ($\ell_2$) & LP probes only \\
Adapt+Hybrid & Adaptive~\cite{Alshahrani2026} & LP probes + periodic full \\
Adapt+LP & Adaptive~\cite{Alshahrani2026} & LP probes only \\
\bottomrule
\end{tabular}
\end{table}

\subsection{Results}\label{sec:results}

We begin by comparing the six configurations in sequential mode to identify which norm--strategy combinations are most effective, then turn to parallelism, batch cutting, batch size sensitivity, and scaling with~$q$.

\subsubsection{\revise{Sequential performance and vertex-finding strategies}}

Table~\ref{tab:config-full} compares the two full enumeration configurations across all eight test problems. \revise{Solving the subproblems accounts for over 99\%} of the computation time in all cases, confirming that the subproblem evaluation step is the bottleneck. The empirical convergence slopes are consistent with or steeper than the theoretical rate $O(k^{2/(1-q)})$: slopes near $-1$ for $q = 3$ and near $-2$ for $q = 2$.

\begin{table}[H]
\centering
\caption{Sequential performance with full vertex enumeration, comparing the Euclidean norm against the adaptive metric~\cite{Alshahrani2026}. Slope is the empirical convergence rate from a log-log fit of the Hausdorff error versus iteration count; the theoretical exponent is $2/(1-q)$.}\label{tab:config-full}
\begin{tabular}{lcrrrrrr}
\toprule
 & & \multicolumn{3}{c}{Euclid+Full} & \multicolumn{3}{c}{Adapt+Full} \\
\cmidrule(lr){3-5} \cmidrule(lr){6-8}
Problem & \revise{$q$} & Iters & Time (s) & Slope & Iters & Time (s) & Slope \\
\midrule
Ball & \revise{3} & 94 & 373 & $-1.16$ & 102 & 326 & $-1.12$ \\
Distance & \revise{3} & 48 & 70 & $-1.36$ & 51 & 143 & $-1.33$ \\
Jahn & \revise{2} & 14 & 13 & $-1.81$ & 24 & 18 & $-1.48$ \\
Ellipsoid3D & \revise{3} & 53 & 357 & $-1.18$ & 50 & 279 & $-1.22$ \\
Ellipsoid4D & \revise{4} & 63 & 479 & $-1.07$ & 73 & 309 & $-1.01$ \\
MOLP & \revise{3} & 34 & 123 & $-1.60$ & \multicolumn{3}{c}{failed\textsuperscript{$\dagger$}} \\
MOP7 & \revise{3} & 15 & 24 & $-1.42$ & 23 & 44 & $-1.51$ \\
AP1 & \revise{3} & 33 & 413 & $-1.28$ & 29 & 260 & $-1.40$ \\
\bottomrule
\end{tabular}

\smallskip\noindent\textsuperscript{$\dagger$}\revise{Here ``failed'' means that the run did not complete: the iteration-dependent (adaptive) metric repeatedly produced ill-conditioned outer-approximation polytopes on which the vertex-enumeration routine broke down, so no result is reported for this configuration.}
\end{table}

Figure~\ref{fig:convergence} shows the convergence behavior on four representative problems; the empirical error decay closely follows the theoretical rate from Table~\ref{tab:config-full}.

\begin{figure}[H]
\centering
\includegraphics[width=\textwidth]{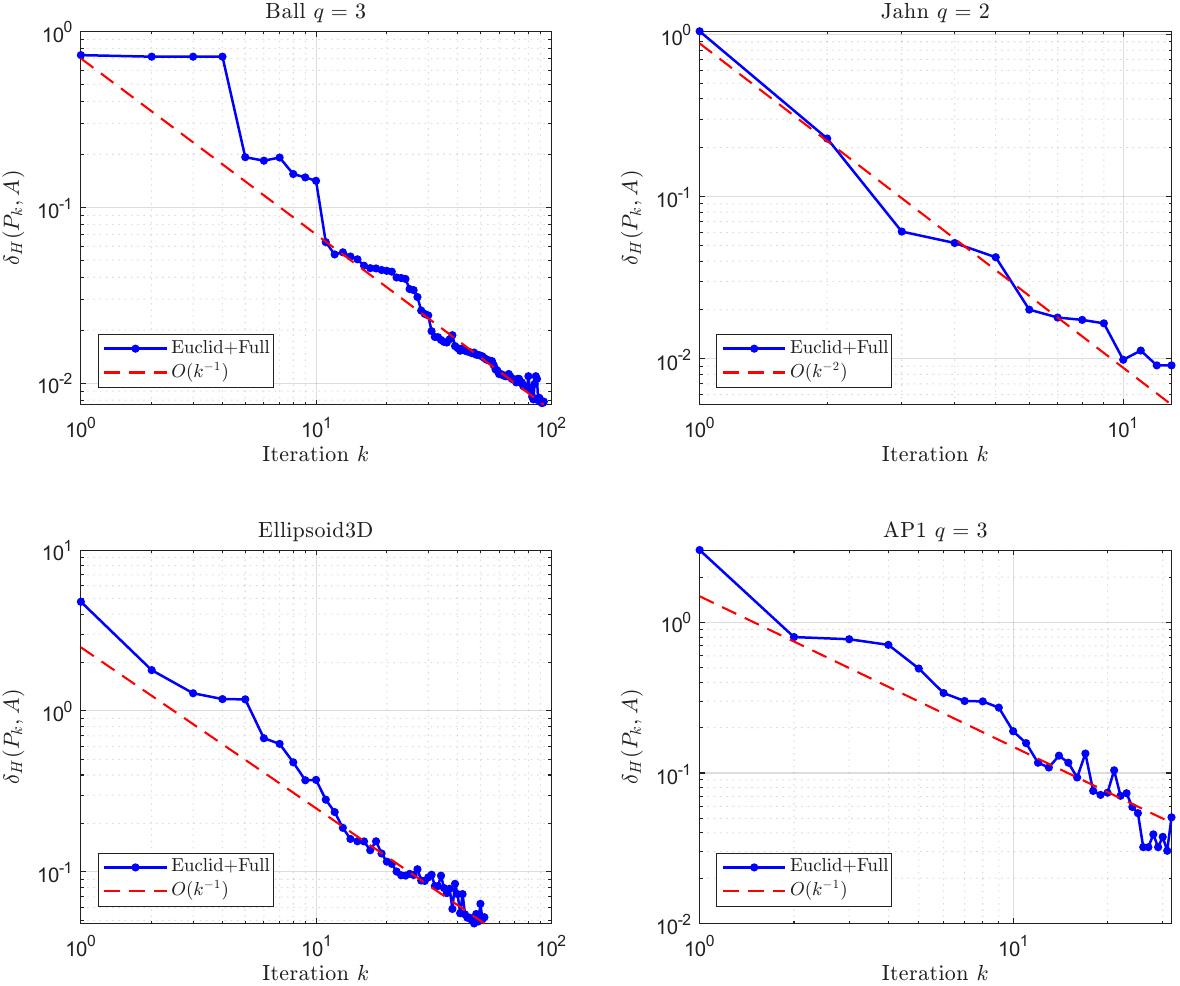}
\caption{Convergence of the Euclid+Full configuration on four test problems (log-log scale). The dashed line shows the theoretical rate $O(k^{2/(1-q)})$: exponent $-1$ for $q = 3$ (Ball, Ellipsoid3D, AP1) and $-2$ for $q = 2$ (Jahn). \revise{By nestedness of the outer approximations ($P_0 \supseteq P_1 \supseteq \cdots \supseteq A$), $\delta_H(P_k, A)$ is non-increasing in $k$; the small non-monotone fluctuations visible near convergence are numerical artifacts of evaluating the per-vertex subproblems at the solver's default precision, not a property of the algorithm.}}\label{fig:convergence}
\end{figure}

The adaptive metric from~\cite{Alshahrani2026} yields mixed results: it reduces wall-clock time on Ellipsoid3D (279\,s vs.\ 357\,s) and AP1 (260\,s vs.\ 413\,s), but increases it on Distance (143\,s vs.\ 70\,s) and Jahn (18\,s vs.\ 13\,s), and fails entirely on MOLP due to numerical instability. Since the metric changes at each iteration, solution caches must use a norm-independent criterion to avoid \revise{stale vertex classifications---cached labels recording whether each vertex's distance to $A$ is within tolerance, which become invalid when the metric (and hence the measured distance $\|z^v\|_{M_k}$) changes, so that a vertex cached as converged under one metric may no longer be converged under the next}; in our implementation we use Euclidean distance for this purpose.

The vertex-finding strategy has a larger effect on performance than the norm choice. Table~\ref{tab:config-strategies} compares the LP probe and hybrid strategies against the full enumeration baseline from Table~\ref{tab:config-full}.

\begin{table}[H]
\centering
\caption{Effect of the vertex-finding strategy on sequential performance (Euclidean norm). \revise{Rather than enumerating all vertices of $P_k$, the LP probe and hybrid strategies find a subset of vertices by solving one linear program over $P_k$ per probe direction.} The full enumeration column repeats the Euclid+Full data from Table~\ref{tab:config-full} for comparison.}\label{tab:config-strategies}
\begin{tabular}{lcrrrrrr}
\toprule
 & & \multicolumn{2}{c}{Euclid+Hybrid} & \multicolumn{2}{c}{Euclid+LP} & \multicolumn{2}{c}{Euclid+Full} \\
\cmidrule(lr){3-4} \cmidrule(lr){5-6} \cmidrule(lr){7-8}
Problem & \revise{$q$} & Iters & Time (s) & Iters & Time (s) & Iters & Time (s) \\
\midrule
Ball & \revise{3} & 36 & 19 & 34 & 17 & 94 & 373 \\
Distance & \revise{3} & 17 & 11 & 17 & 11 & 48 & 70 \\
Jahn & \revise{2} & 10 & 5 & 8 & 4 & 14 & 13 \\
Ellipsoid3D & \revise{3} & 22 & 14 & 22 & 13 & 53 & 357 \\
Ellipsoid4D & \revise{4} & 34 & 24 & 34 & 22 & 63 & 479 \\
MOLP & \revise{3} & 19 & 10 & 18 & 9 & 34 & 123 \\
MOP7 & \revise{3} & 13 & 12 & 13 & 11 & 15 & 24 \\
AP1 & \revise{3} & 20 & 34 & 20 & 33 & 33 & 413 \\
\bottomrule
\end{tabular}
\end{table}

\revise{The LP-based strategies achieve 10--25$\times$ reductions in wall-clock time relative to full enumeration on Ball, Ellipsoid3D, Ellipsoid4D, and AP1, along with a 40--60\% decrease in iteration count. The hybrid and LP probe strategies exhibit near-identical performance, indicating that periodic full enumeration incurs negligible computational overhead.} Combining the adaptive metric with LP-based strategies (not shown) yields similar timings, with no clear advantage over the Euclidean norm. These strategies are orthogonal to the parallel and batch techniques analyzed next: \revise{LP probes reduce the per-iteration cost by evaluating fewer vertices, batch cutting reduces the number of outer iterations, and parallelism reduces the per-iteration wall-clock time by distributing the $N_k$ independent subproblems across workers. The synchronous parallel implementation evaluated below (Algorithm~\ref{alg:parallel}) uses full vertex enumeration, and therefore does not yet combine parallelism with the LP-probe strategy.}

\subsubsection{\revise{How much speedup does parallelism deliver?}}

\revise{Because solving the subproblems accounts for almost all of the per-iteration computation and the $N_k$ subproblems at each iteration are mutually independent---each depends only on its vertex and the target set $A$---they are a natural target for parallelization.} Table~\ref{tab:parallel} compares synchronous parallel execution on 8~workers against the sequential Euclid+Full baseline from Table~\ref{tab:config-full}.

\begin{table}[H]
\centering
\caption{Synchronous parallel speedup on 8 workers, compared against the sequential Euclid+Full baseline from Table~\ref{tab:config-full}. Both use full vertex enumeration; the only difference is that \revise{the subproblems are solved concurrently across workers via \texttt{parfor}}.}\label{tab:parallel}
\begin{tabular}{lcrrrrr}
\toprule
 & & \multicolumn{2}{c}{Sequential} & \multicolumn{2}{c}{Parallel (8 workers)} & \\
\cmidrule(lr){3-4} \cmidrule(lr){5-6}
Problem & \revise{$q$} & Iters & Time (s) & Iters & Time (s) & Speedup \\
\midrule
Ball & \revise{3} & 94 & 373 & 94 & 155 & 2.4$\times$ \\
Distance & \revise{3} & 48 & 70 & 48 & 31 & 2.3$\times$ \\
Jahn & \revise{2} & 14 & 13 & 17 & 11 & 1.2$\times$ \\
Ellipsoid3D & \revise{3} & 53 & 357 & 55 & 144 & 2.5$\times$ \\
Ellipsoid4D & \revise{4} & 63 & 479 & 64 & 167 & 2.9$\times$ \\
MOLP & \revise{3} & 34 & 123 & 34 & 48 & 2.6$\times$ \\
MOP7 & \revise{3} & 15 & 24 & 15 & 22 & 1.1$\times$ \\
AP1 & \revise{3} & 33 & 413 & 27 & 99 & 4.2$\times$ \\
\bottomrule
\end{tabular}
\end{table}

Speedups range from $1.1\times$ (MOP7, where few vertices limit parallelism) to $4.2\times$ (AP1, \revise{where the subproblems are expensive to solve}). Iteration counts are nearly identical in most cases; minor differences (e.g., Jahn: 14 vs.\ 17 iterations, AP1: 33 vs.\ 27) arise from floating-point nondeterminism in the parallel solver, which can produce slightly different cut sequences.

\subsubsection{\revise{How many iterations does batch cutting save, and does this translate to wall-clock gains?}}

While parallelism reduces per-iteration cost, the algorithm still adds only a single cut per iteration, discarding the solved subproblem information for all other vertices. Batch cutting addresses this by adding up to $K$ cuts per iteration. Table~\ref{tab:batch} compares batch cutting ($K = 5$, 8~workers) against synchronous parallel.

\begin{table}[H]
\centering
\caption{Batch cutting ($K = 5$~\revise{ with} 8 workers) versus synchronous parallel \revise{($K = 1$ with 8 workers)}. Iteration and time reductions are relative to synchronous parallel. Negative time reduction indicates batch is slower in wall-clock time despite fewer iterations.}\label{tab:batch}
\begin{tabular}{lcrrrrrr}
\toprule
 & & \multicolumn{2}{c}{Sync} & \multicolumn{2}{c}{Batch ($K\!=\!5$)} & Iter. & Time \\
\cmidrule(lr){3-4} \cmidrule(lr){5-6}
Problem & \revise{$q$} & Iters & Time (s) & Iters & Time (s) & red. & red. \\
\midrule
Ball & \revise{3} & 94 & 155 & 24 & 109 & 74\% & 30\% \\
Distance & \revise{3} & 48 & 31 & 14 & 45 & 71\% & $-$45\%\textsuperscript{$*$} \\
Jahn & \revise{2} & 17 & 11 & 6 & 5 & 65\% & 55\% \\
Ellipsoid3D & \revise{3} & 55 & 144 & 13 & 39 & 76\% & 73\% \\
Ellipsoid4D & \revise{4} & 64 & 167 & 24 & 366 & 63\% & $-$119\%\textsuperscript{$*$} \\
MOLP & \revise{3} & 34 & 48 & \multicolumn{4}{c}{failed\textsuperscript{$\dagger$}} \\
MOP7 & \revise{3} & 15 & 22 & 3 & 6 & 80\% & 73\% \\
AP1 & \revise{3} & 27 & 99 & 9 & 44 & 67\% & 56\% \\
\bottomrule
\end{tabular}

\smallskip\noindent\textsuperscript{$*$}\revise{On these problems, the additional cuts accelerate the growth of the vertex count, so that the increase in per-iteration cost outweighs the reduction in iteration count, resulting in a net increase in wall-clock time.}

\smallskip\noindent\textsuperscript{$\dagger$}\revise{In batch mode, the outer-approximation polytope for the MOLP problem became numerically ill-conditioned, causing the convex solver to stall; the run did not complete, so no result is reported.}
\end{table}

Batch cutting consistently reduces iterations by 62--80\%, with MOP7 achieving the largest reduction (from 15 to 3 iterations). \revise{The wall-clock performance, however, presents a more nuanced picture.} On problems where the per-iteration cost grows modestly with the number of cuts---Ball, Ellipsoid3D, MOP7, AP1---batch cutting \revise{yields wall-clock time reductions of 30--73\%}. On Distance and Ellipsoid4D, the additional cuts accelerate vertex count growth, \revise{rendering} each batch iteration more expensive than several synchronous ones. This is the central tradeoff of batch cutting: each \revise{additional} cut tightens the \revise{polyhedral} approximation, \revise{but simultaneously} increases the polytope complexity that subsequent iterations must process.

\subsubsection{\revise{How does the batch size $K$ affect the tradeoff between iteration reduction and per-iteration cost?}}

The batch size~$K$ controls this tradeoff. Table~\ref{tab:batch-k} varies $K$ from 1 (one cut per iteration, equivalent to synchronous parallel) to \revise{$N^\varepsilon_k$ (all eligible cuts, i.e., all $v$ with $\|z^v\| > \varepsilon$)} on four representative problems.

\begin{table}[H]
\centering
\caption{Effect of batch size $K$ on iteration count and wall-clock time (Euclidean norm, 8 workers). The fastest wall-clock time for each problem is underlined.}\label{tab:batch-k}
\begin{tabular}{lrrrrr}
\toprule
\revise{Batch size $K$} & \revise{1} & \revise{3} & \revise{5} & \revise{10} & \revise{$N^\varepsilon_k$} \\
\midrule
\multicolumn{6}{l}{\emph{Iterations}} \\
Ball $q\!=\!3$ & 94 & 41 & 27 & 14 & 10 \\
Ellipsoid3D & 49 & 21 & 15 & 9 & 8 \\
MOP7 $q\!=\!3$ & 17 & 7 & 3 & 3 & 3 \\
AP1 $q\!=\!3$ & 24 & 12 & 8 & 7 & 6 \\
\midrule
\multicolumn{6}{l}{\emph{Wall-clock time (s)}} \\
Ball $q\!=\!3$ & 145 & 159 & 129 & 61 & \underline{45} \\
Ellipsoid3D & 100 & 57 & 46 & \underline{26} & 27 \\
MOP7 $q\!=\!3$ & 10 & 7 & \underline{3} & 3 & 3 \\
AP1 $q\!=\!3$ & 75 & 45 & \underline{30} & 33 & 31 \\
\bottomrule
\end{tabular}
\end{table}

Increasing $K$ consistently reduces iterations, with diminishing returns beyond $K = 5$; MOP7 saturates at 3~iterations for all $K \geq 5$. The optimal $K$ for wall-clock time \revise{is problem dependent}: \revise{$K = N^\varepsilon_k$} is fastest for Ball (45\,s), $K = 10$ for Ellipsoid3D (26\,s), and $K = 5$ for AP1 (30\,s). A moderate choice of $K \in \{5, 10\}$ provides a reasonable balance across all tested problems. Figure~\ref{fig:batch-k} visualizes this tradeoff: the left panel shows monotone iteration reduction with diminishing returns, while the right panel reveals the non-monotone wall-clock behavior---for Ball, time initially increases at $K = 3$ before decreasing, reflecting the cost of processing more vertices per iteration.

\begin{figure}[H]
\centering
\includegraphics[width=\textwidth]{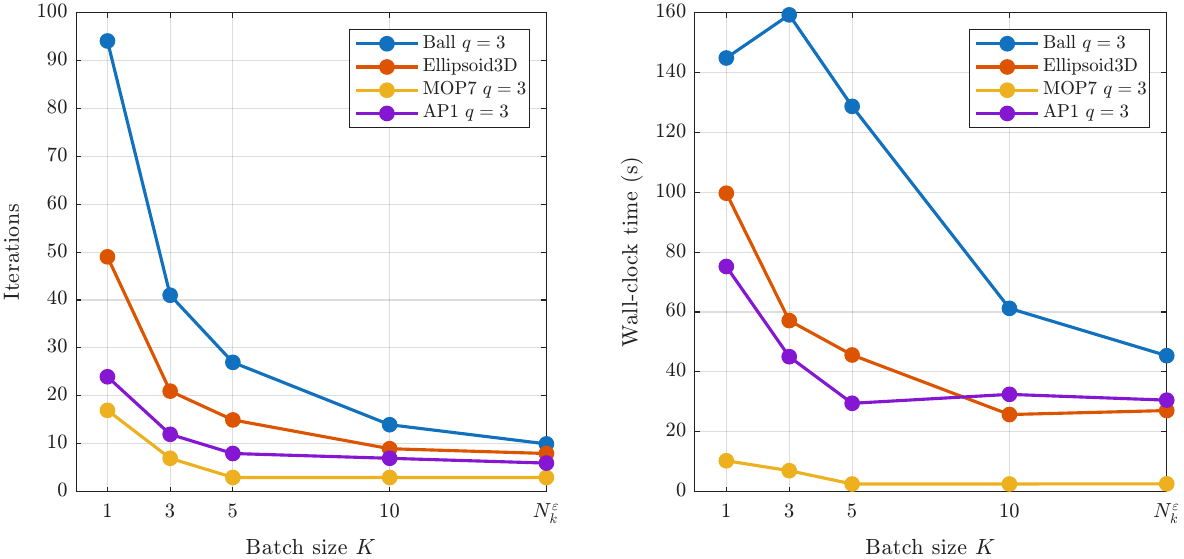}
\caption{Effect of batch size $K$ on iteration count (left) and wall-clock time (right) for four test problems (Euclidean norm, 8 workers). \revise{The total number of iterations decreases} monotonically with $K$, but wall-clock time is non-monotone: the per-iteration cost \revise{due to} additional cuts can outweigh the iteration savings at intermediate~$K$.}\label{fig:batch-k}
\end{figure}

\subsubsection{\revise{How do these findings change with the objective dimension $q$?}}

Finally, Table~\ref{tab:scaling} examines how these findings scale with the objective dimension~$q$, comparing synchronous and \revise{batch parallel (the batch-cutting algorithm of Algorithm~\ref{alg:batch}, here with $K = 5$, run on the same $\nw = 8$ workers)} on the Ball problem across $q \in \{2, 3, 4, 5\}$.

\begin{table}[H]
\centering
\caption{Scaling of synchronous and batch parallel ($K = 5$) with objective dimension~$q$ on the Ball problem (8 workers, Euclidean norm). Iteration and time reductions are relative to synchronous parallel.}\label{tab:scaling}
\begin{tabular}{crrrrrr}
\toprule
 & \multicolumn{2}{c}{Sync} & \multicolumn{2}{c}{Batch ($K\!=\!5$)} & Iter. & Time \\
\cmidrule(lr){2-3} \cmidrule(lr){4-5}
$q$ & Iters & Time (s) & Iters & Time (s) & red. & red. \\
\midrule
2 & 21 & 16 & 8 & 9 & 62\% & 44\% \\
3 & 92 & 127 & 30 & 152 & 67\% & $-$20\% \\
4 & 113 & 342 & 41 & 1003 & 64\% & $-$193\% \\
5 & 64 & 638 & 21 & 788 & 67\% & $-$24\% \\
\bottomrule
\end{tabular}
\end{table}

The iteration reduction is consistent at 62--67\% across all values of~$q$, confirming that batch cutting is an effective iteration-reduction mechanism independent of problem dimension. The wall-clock tradeoff, however, worsens at higher~$q$: at $q = 4$, batch takes nearly three times as long as synchronous parallel despite using only 36\% of the iterations, because the vertex count grows \revise{rapidly} with~$q$ and each additional batch cut amplifies this growth. \revise{Batch cutting is therefore most effective when solving the subproblems dominates the per-iteration cost and the vertex count stays moderate---so that the additional cuts add little overhead---or when the tolerance is tight enough that the number of iterations, rather than the per-iteration cost, becomes the primary bottleneck.}

The LP probe strategy offers the largest sequential speedups (10--25$\times$) by avoiding full vertex enumeration entirely; integrating it with the parallel and batch framework is a promising direction.

\section{Conclusion}\label{sec:conclusion}

We \revise{introduce} parallel and batch-cutting variants of the norm-minimization-based CVOP algorithm and \revise{prove} that batch cutting preserves the convergence rate $O(k^{2/(1-q)})$ (Theorem~\ref{thm:batch-rate}). The key insight is that each outer iteration of the batch algorithm still includes the \revise{cut corresponding to the farthest vertex}, so the deviation-vector separation argument of~\revise{\cite[Lemma~7.5]{Ararat2024} still holds}.

On the computational side, synchronous parallelism \revise{increases the speed by a factor of 1.1 to 4.2} on 8~cores, and batch cutting at $K = 5$ consistently reduces the iteration count by 62--80\%. The wall-clock benefit of batch cutting, however, depends on the problem: \revise{each additional cut tightens the approximation but also enlarges the outer-approximation polytope, so on problems where the vertex count grows quickly the higher per-iteration cost can outweigh the savings from fewer iterations. This effect becomes more pronounced as the objective dimension~$q$ increases, and is governed by the batch size~$K$: among the values tested, a moderate choice $K \in \{5, 10\}$ gave the best balance between the reduction in iteration count and the added per-iteration overhead.}

Several directions remain open:
\begin{enumerate}[label=(\arabic*)]
\item Replacing the general-purpose convex solver (CVX) with a specialized solver exploiting the shared constraint structure of the subproblems could \revise{substantially reduce the time to solve each subproblem}. Since all $N_k$ subproblems share the same feasible set and differ only in the reference point $v$, \revise{two strategies are natural candidates: \emph{warm-starting}---initializing each subproblem from the optimal solution of a previously solved, nearby one to reduce the number of solver iterations---and \emph{batched} solvers, which solve many of these similar subproblems together to amortize the per-call setup overhead}.

\item GPU-accelerated solvers for batched convex programs\revise{---e.g., quadratic programs (QPs) or second-order cone programs (SOCPs)---}could further exploit the structure, particularly for large~$q$ where $N_k$ grows.

\item The convergence rate $O(k^{2/(1-q)})$ in Theorem~\ref{thm:batch-rate} is stated in terms of outer iterations. A tighter analysis that accounts for all $k_{\mathrm{eff}} = Kk$ cuts \revise{accumulated after $k$ iterations}---not just the \revise{$k$} farthest-vertex cuts---could potentially yield the sharper bound $O(k_{\mathrm{eff}}^{\,2/(1-q)})$. This would require extending the deviation vector separation argument of~\cite{Ararat2024} to non-farthest-vertex cuts within a batch, which we leave as an open problem.

\item \revise{Vertex enumeration is the main step of each iteration that is not parallelized. Instead of recomputing $\ext(P_{k+1})$ from scratch, one could update it \emph{incrementally} when a halfspace is added: only the vertices that the new hyperplane cuts off are removed and the new vertices it creates are added, while the rest of $\ext(P_k)$ is left unchanged. This would avoid full enumeration at every iteration and reduce the serial fraction in Amdahl's law further, improving the parallel speedup; for batch cutting, the update would be applied for each of the $K$ halfspaces added in an iteration.}

\item \revise{As our experiments indicate, the growth in per-iteration cost induced by batch cutting can offset its reduction in the iteration count, particularly at higher objective dimensions~$q$. Combining batch cutting with the LP-probe strategy may alleviate this tradeoff, since the latter restricts the number of vertices evaluated per iteration and thereby curbs the growth in per-iteration cost.}
\end{enumerate}

\section*{Declarations}

\textbf{Conflict of interest.} The author declares no conflict of interest.

\textbf{Data availability.} All numerical results were generated by the algorithms described in this paper. No external datasets were used. The code is available from the author upon reasonable request.

\textbf{Funding.} This research did not receive any specific grant from funding agencies in the public, commercial, or not-for-profit sectors.

\textbf{AI use.} During the preparation of this work, the author used Claude (Anthropic) to assist with manuscript editing, including tightening prose, verifying LaTeX formatting, and checking internal consistency of cross-references and notation. The author reviewed and edited all output and takes full responsibility for the content of the publication.

\section*{Acknowledgment}
The author acknowledges the institutional support provided by King Fahd University of Petroleum \& Minerals (KFUPM) and the Interdisciplinary Research Center for Smart Mobility and Logistics (IRC-SML) at KFUPM.

\printbibliography

\end{document}